\numberwithin{equation}{section}
\journal{}
\begin{document}

\newtheorem{theorem}{Theorem}[section]
\newtheorem{assumption}{Assumption}
\newtheorem{definition}[theorem]{Definition} 
\newtheorem{lemma}[theorem]{Lemma} 
\newtheorem{corollary}[theorem]{Corollary}
\newtheorem{example}{Example}[section]
\newtheorem{proposition}[theorem]{Proposition}
\newtheorem{remark}{Remark}[section]

\def\e{\varepsilon}
\def\Rn{\mathbb{R}^{n}}
\def\Rm{\mathbb{R}^{m}}
\def\E{\mathbb{E}}
\def\hte{\bar\theta}
\def\cC{{\mathcal C}}
\newcommand{\Cp}{\mathbb C}
\newcommand{\R}{\mathbb R}
\renewcommand{\L}{\mathcal L}
\newcommand{\supp}{\mathrm{supp}\,}

\numberwithin{equation}{section}

\begin{frontmatter}

\title{{\bf Weak pullback attractors for damped stochastic fractional Schr\"odinger equation on $\mathbb{R}^n$
}}
\author{\normalsize{\bf Ao Zhang$^{a,}\footnote{ aozhang1993@csu.edu.cn}$,
Yanjie Zhang$^{b}\footnote{zhangyj2022@zzu.edu.cn}$,
Sanyang Zhai$^{b}\footnote{zhai20030126@gs.zzu.edu.cn}$,
Li Lin$^{c,}\footnote{linlimath@whut.edu.cn}$,
} \\[10pt]
\footnotesize{${}^a$ School of Mathematics and Statistics and HNP-LAMA,  Central South University,  Changsha 410083, China} \\[5pt]
\footnotesize{${}^b$ School of Mathematics and Statistics, Zhengzhou University 450001,  China.}  \\[5pt]
\footnotesize{${}^c$ College of Science, Wuhan University of Technology, Wuhan 430074, China.} \\[5pt]
}

\begin{abstract}
This article discusses the weak pullback attractors for a damped stochastic fractional Schrödinger equation on $\mathbb{R}^n$ with $n\geq 2$. By utilizing the stochastic Strichartz estimates and a stopping time technique argument, the existence and uniqueness of a global solution for the systems with the nonlinear term $|u|^{2\sigma}u$ are proven. Furthermore, we define a mean random dynamical system due to the uniqueness of the solution, which has a unique weak pullback mean random attractor in $L^\rho\left(\Omega; L^2\left(\mathbb{R}^n\right)\right)$. This result highlights the long-term dynamics of a broad class of stochastic fractional dispersion equations.

\end{abstract}

\begin{keyword}
Stochastic fractional Schr\"{o}dinger equation, Strichartz estimate, global existence, weak pullback mean random attractor.

\end{keyword}

\end{frontmatter}


\section{Introduction.}

This article is devoted to studying the asymptotic dynamics for a dynamical system generated by a nonlinear fractional Schr\"odinger type equation with additive noise that reads
\begin{equation}{\label{li}}
\left\{
\begin{aligned}
&i du-\left[(-\Delta)^{\alpha} u-|u|^{2\sigma}u\right] dt+i\gamma udt=f(t,x,u)dt+ dW(t), \quad x \in \mathbb{R}^{n}, \quad t \geq 0, \\
&u(0)=u_0,
\end{aligned}
\right.
\end{equation}
where $u$ is a complex valued process defined on $\mathbb{R}^{n} \times \mathbb{R}^{+}$, $i=\sqrt{-1}$, $0<\sigma,\gamma <\infty$, $f$ is a given function on $\mathbb{R}\times \mathbb{R}^{n}\times \mathbb{R}$, and $W(t)$ is a $Q$-Wiener process. The fractional Laplacian operator $(-\Delta)^{\alpha}$ with admissible exponent $\alpha \in (0,1)$ is involved.

It would be helpful to provide a mathematical definition of $W(t)$. Let $(\Omega, \mathcal{F}, \mathbb{P})$ be a probability space, $\left(\mathcal{F}_{t}\right)_{t \geq 0}$ be a filtration, and let $\left(\beta_{k}\right)_{k \in \mathbb{N}}$ be a sequence of independent Brownian motions associated to this filtration. Given $\left(e_{k}\right)_{k \in \mathbb{N}}$ an orthonormal basis of $L^{2}\left(\mathbb{R}^{n}, \mathbb{R}\right)$, and a linear operator $\Phi$ on $L^{2}\left(\mathbb{R}^{n}, \mathbb{R}\right)$ with a real-valued kernel $k$:
\begin{equation}
\Phi h(x)=\int_{\mathbb{R}^{n}} k(x, y) h(y) d y, \quad h \in L^{2}\left(\mathbb{R}^{n}, \mathbb{R}\right).
\end{equation}
Then the process
\begin{equation*}
W(t, x, \omega):=\sum_{k=0}^{\infty} \beta_{k}(t, \omega) \Phi e_{k}(x), \quad t \geq 0, \quad x \in \mathbb{R}^{n}, \quad \omega \in \Omega
\end{equation*}
is a Wiener process on $L^{2}\left(\mathbb{R}^{n}, \mathbb{R}\right)$ with covariance operator $\Phi \Phi^{*}$.

The fractional nonlinear Schr\"odinger equation finds applications in various fields such as nonlinear optics \cite{Longhi15}, quantum physics \cite{BM23} and water propagation \cite{Pusateri14}. Inspired by the Feynman path approach to quantum mechanics, Laskin \cite{Laskin02} used the path integral over L\'evy-like quantum mechanical paths to obtain a fractional Schr\"odinger equation.  Kirkpatrick et al. \cite{Kirkpatrick02} considered a general class of discrete nonlinear Schr\"odinger equations on the lattice $h\mathbb{Z}$ with mesh size $h>0$, they showed that the limiting dynamics were given by a nonlinear fractional  Schr\"odinger equation when $h\rightarrow 0$. Guo and Huang \cite{Guo12} applied concentration compactness and commutator estimates to obtain the existence of standing waves for nonlinear fractional Schr\"odinger equations under some assumptions. Shang and Zhang \cite{SZ} proved that the critical fractional Schr\"odinger equation had a nonnegative ground state solution, and gave the relation between the number of solutions and the set's topology.  Choi and Aceves \cite{CA23} proved that the solutions of the discrete nonlinear Schr\"odinger equation with non-local algebraically decaying coupling converged strongly in $L^2(\mathbb{R}^n)$ to those of the continuum fractional nonlinear Schr\"odinger equation.  Frank and his collaborators \cite{LS15} proved general uniqueness results for radial solutions of linear and nonlinear equations involving the fractional Laplacian $(-\Delta)^{\alpha}$ with $\alpha \in(0,1)$ for any space dimensions $N\geq 1$. Wang and Huang \cite{Huang15} proposed an energy conservative difference scheme for the nonlinear fractional Schr\"odinger equations and gave a rigorous analysis of the conservation property. Chen and Guo \cite{CG21} studied a regularized Lie-Trotter splitting spectral method for a regularized space-fractional logarithmic Schr\"odinger equation by introducing a small regularized parameter. An and Yang \cite{AY23} showed a connection between fractional Schr\"odinger equations with power-law nonlinearity and fractional Schr\"odinger equations with logarithm-law nonlinearity.

There has been a lot of interest in the study of asymptotic behavior for fractional Schr\"odinger equation. B. Alouini \cite{BA21, BA22} focused on the asymptotic dynamics for a dynamical system generated by a nonlinear dispersive fractional Schr\"odinger type equation and proved the existence of regular finite dimensional global attractor in the phase space with finite fractal dimension and the energy space. Goubet and Zahrouni \cite{GZ17} considered a weakly damped forced nonlinear fractional Schr\"odinger equation and proved that this global attractor had a finite fractal dimension if the external force was in a suitable weighted space.  Gu et al. \cite{AG18} investigated the regularity of random attractors for the non-autonomous non-local fractional stochastic reaction-diffusion equations in $H^{s}(\mathbb{R}^n)$ with $s \in (0,1)$.

In the framework of stochastic mechanics, there are few results on the well-posedness and asymptotic behavior of damped stochastic fractional nonlinear Schr\"odinger equation in $L^2(\mathbb{R}^n)$,  due to the complexity brought by the fractional Laplacian operator $(-\Delta)^{\alpha}$, the nonlinear term $|u|^{2\sigma}u$ and white noise. Yuan and Chen \cite{Chen17} proved the existence of martingale solutions for the stochastic fractional nonlinear Schr\"odinger equation on a bounded interval.  As far as the author is aware, there is no result reported in the literature for the stochastic system \eqref{li} with a nonlinear diffusion term $|u|^{2\sigma}u$ and fractional Laplacian operator $(-\Delta)^{\alpha}$ on an unbounded interval. 
Here we are particularly interested in the long-term
dynamics of the stochastic system \eqref{li}, especially the existence and uniqueness of weak pullback attractor for the damped fractional
Schr\"odinger equation with additive noise. More precisely, we will prove the existence and uniqueness of weak pullback mean random attractor of the corresponding mean random dynamical system associated with \eqref{li} in $L^\rho\left(\Omega; L^2\left(\mathbb{R}^n\right)\right)$. To study the weak pullback attractor of \eqref{li}, we must establish the well-posedness of the stochastic system \eqref{li}. In contrast to the case of stochastic nonlinear Schr\"odinger equation in $L^2(\mathbb{R}^n)$.  There are some essential difficulties in our problems. The first difficulty is the appearance of the fractional Laplacian operator $(-\Delta)^{\alpha}$. The deterministic cubic fractional nonlinear Schr\"odinger equation is ill-posedness in $L^{2}(\mathbb{R}^{n})$ (see e.g., \cite{Cho15}). The natural question now is whether one can get quantitative information on the stochastic fractional nonlinear Schr\"odinger equation in $L^2(\mathbb{R}^n)$. The second difficulty lies in how to solve the problem of low regularity of noisy sample paths and non-compact Sobolev embeddings on unbounded regions. The third difficulty lies in how to find the appropriate techniques for handling nonlinear terms $|u|^{2\sigma}u$, which is different from the existing results.

This paper is organized as follows.  In Section 2,  we introduce some notations and state our main results in this present paper.
In Section 3, we use the stopping time technique, and deterministic and stochastic fractional Strichartz inequalities to prove the existence of the global existence of the original equation \eqref{li}. The equation \eqref{li} provides an infinite dimensional dynamical system that possesses a weak pullback mean random attractors in $L^\rho\left(\Omega; L^2\left(\mathbb{R}^n\right)\right)$ is proved in Section 4.

\section{Statement of the main results}
\subsection{Assumptions and preliminaries}
We introduce some notations throughout this paper. The capital letter $C$ denotes a positive constant, whose value may change from one line to another. The notation $C_p$ emphasizes that the constant only depends on the parameter $p$, while $C(\cdot\cdot\cdot)$ is used for the case of
more than one parameter. For $p\geq 2$, the notation $L^{p}$ denotes the Lebesgue space of complex-valued functions.  The inner product in $L^{2}\left(\mathbb{R}^{n}\right)$ is endowed with
\begin{equation*}
(g_1, g_2)=\int_{\mathbb{R}^{n}} g_1(x) \bar{g}_2(x) dx, \ \forall g_1, g_2 \in L^{2}\left(\mathbb{R}^{n}\right).
\end{equation*}

Given two separable Hilbert spaces $H$ and $\widetilde{H}$, the notation $L_{HS}(H,\widetilde{H})$ denotes the space of Hilbert-Schmidt operators from $H$ into $\widetilde{H}$. Let $\Phi: H\rightarrow \widetilde{H} $ be a bounded linear operator. The operator $\Phi$ is called the Hilbert-Schmidt operator if there is an orthonormal basis $(e_{k})_{k\in \mathbb{N}}$ in $H$ such that
\begin{equation*}
\|\Phi\|^2_{L_{HS}(H,\widetilde{H})}=\operatorname{tr} \Phi^* \Phi=\sum_{k=0}^{\infty}\left|\Phi e_k\right|_{\widetilde{H}}^2< \infty.
\end{equation*}
When $H =\widetilde{H}=L^2(\mathbb{R}^n; \mathbb{R})$, $L_{HS}\left(L^2(\mathbb{R}^n; \mathbb{R}), L^2(\mathbb{R}^n; \mathbb{R})\right)$ is simply denoted by $L_{HS}$.

For convenience, we often write $L^{p}(\Omega, \mathcal{F}; X)$ as $L^{p}(\Omega, X)$.
Let $\mathcal{D}$ be a collection of some families of nonempty bounded subsets of $L^{p}(\Omega, X)$ parametrized by $\varrho\in \mathbb{R}$, that is,
\begin{equation}
\mathcal{D}=\left\{D=\left\{D(\varrho)\subseteq L^{p}(\Omega, X): D(\varrho)\neq \emptyset \quad \text{bounded}, \varrho \in \mathbb{R}\right\}: D ~~\text{satisfies some conditions} \right\}.
\end{equation}
In the following, we recall some basic definitions of the mean random dynamical system, which come from \cite[Definition 2.1, Definition 2.2, Definition 2.4]{BW19} respectively.
\begin{definition}
A family $\Phi=\{\Phi(t,\varrho): t \in \mathbb{R}^{+}, \varrho \in \mathbb{R}\}$ of mapping from $L^{p}(\Omega, X)$ to $L^{p}(\Omega, X)$ is called a mean random dynamical system on $L^{p}(\Omega, X)$ if for all $\tau \in \mathbb{R}$ and $t, s \in \mathbb{R}^{+}$, the following conditions are fulfilled:\\
(i)~~$\Phi(0,\varrho)$ is the identity operator on $L^{p}(\Omega, X)$; \\
(ii)~~$\Phi(t+s,\varrho)=\Phi(t,\varrho+s)\circ \Phi(s,\varrho)$.

\end{definition}

\begin{remark}
We call such $\Phi$ a  mean random dynamical system simply because $\Omega$ is a probability space.
\end{remark}

\begin{definition}
A family $K=\{K(\varrho): \varrho \in \mathbb{R}\} \in \mathcal{D}$ is called a $\mathcal{D}$-pullback absorbing set for $\Phi$ if for every $\varrho \in \mathbb{R}$ and $D \in \mathcal {D}$, there exists $T=T(\varrho, D)>0$ such that for all $t\geq T$
\begin{equation}
\Phi(t, \varrho-t)\left(D(\varrho-t)\right)\subseteq K(\varrho).
\end{equation}
\end{definition}

If, in addition, $K(\varrho)$ is a closed nonempty subset of $L^{p}(\Omega, X)$ for every $\varrho \in \mathbb{R}$, then $K=\{K(\varrho): \varrho \in \mathbb{R}\}$ is called a closed $\mathcal{D}$-pullback absorbing set for $\Phi$. If $K(\varrho)$ is a weakly compact nonempty subset of $L^{p}(\Omega, X)$ for every $\varrho \in \mathbb{R}$, then $K=\{K(\varrho): \varrho \in \mathbb{R}\}$ is called a weakly compact $\mathcal{D}$-pullback absorbing set for $\Phi$.


\begin{definition}
A family $\mathcal{A}=\{\mathcal{A}(\varrho): \varrho \in \mathbb{R}\}\in \mathcal{D}$ is called a weak $\mathcal{D}$-pullback mean random attractor for $\Phi$ in $L^{p}(\Omega, X)$ if the following conditions are fulfilled:\\
(i) $\mathcal{A}(\varrho)$ is a weakly compact subset of $L^{p}(\Omega, X)$ for every $ \varrho\in \mathbb{R}$; \\
(ii) $\mathcal{A}$ is a $\mathcal{D}$-pullback weakly attracting set of $\Phi$  in $L^p(\Omega, X)$ in the sense of Definition; \\
(iii) $\mathcal{A}$ is the minimal element of $\mathcal{D}$ with property (i) and (ii); that is, if $B=\{B(\varrho): \varrho \in \mathbb{R}\} \in \mathcal{D}$ is a $\mathcal{D}$-pullback weakly compact weakly attracting set of $\Phi$ in $L^p(\Omega, X)$, then $\mathcal{A}(\varrho)\subseteq {B}(\varrho)$ for all $\varrho \in \mathbb{R}$.

\end{definition}
Here, we give some regularity assumption for the coefficient $f$ of system \eqref{li}. \\
\noindent ($\bf{A_{f}}$): For the function $f$ in \eqref{li}, we assume that $f: \mathbb{R}\times \mathbb{R}^n\times \mathbb{C}\rightarrow \mathbb{C}$ is continuous
and satisfies, for all $t, u\in \mathbb{R}$ and $x,y \in \mathbb{R}^{n}$,
\begin{align}\label{assum1}
 \mathbf{Im}\left(f(t,x,u)\bar{u}\right)&\leq \beta|u|^2+\psi_1(t,x),\\\label{assum2}
|f(t,x,u)|&\leq \psi_2(x)|u|+\psi_3(t,x),\\\label{assum3}
\frac{\partial f}{\partial u}(t,x,u)&\leq \psi_4(x), 
\end{align}
where $\beta>0$ is a constant, and
\begin{equation*}
\begin{aligned}
  &\psi_1 \in L^1(\mathbb{R},L^1(\mathbb{R}^n)), \quad\psi_2\in L^{\frac{nr}{4\alpha}}(\mathbb{R}^n),\\
  &\psi_3\in L_{loc}^r\left(\mathbb{R}, L^{p^{\prime}}(\mathbb{R}^n)\right), \psi_4\in L^{\frac{nr}{4\alpha}}(\mathbb{R}^n),
\end{aligned}
\end{equation*}
where $r=\frac{4(\sigma+1)\alpha}{n\sigma}$ and $p=2\sigma+2$.

\subsection{Main results}
This paper aims to study the weak pullback attractor for the damped stochastic fractional Schr\"odinger equation on $\mathbb{R}^n$ with $n\geq 2$. Our work is divided into two parts. In the first part, we will examine the well-posedness of a global solution for the systems with the fractional Laplacian operator, nonlinear term $|u|^{2\sigma}u$ and white noise. In the second part, we will prove that this equation provides an infinite dimensional dynamical system that possesses a weak pullback mean random attractors in $L^\rho\left(\Omega; L^2\left(\mathbb{R}^n\right)\right) $.

The first main result of this paper is about the existence and uniqueness of a global solution for the systems \eqref{li}.
\begin{theorem}
\label{theorem0}
 Let $n \geq 2$, $\alpha \in\left[\frac{n}{2 n-1}, 1\right)$, $0\leq\sigma<\frac{2 \alpha}{n-2 \alpha}$ and $\Phi\in L_{HS}\left(L^2(\mathbb{R}^n), L^2(\mathbb{R}^n)\right)$. Assume that $(r,p)=(\frac{4(\sigma+1)\alpha}{n\sigma}, 2\sigma+2)$, and that the radial initial data $u_0\in L^{\rho}\left(\Omega, \mathcal{F}_0;L^{2}(\mathbb{R}^{n})\right)$ for $\rho\geq r$. Then under Assumption ($\bf{A_{f}}$), for every $T>0$, there exists a unique global mild solution $u=u(t), t\in[0, T]$ of \eqref{li} such that
 $$
 u\in L^{\rho}(\Omega; L^{\infty}(0, T; L^{2}(\mathbb{R}^{n})) \cap L^{1}(\Omega; L^{r}(0, T; L^{p}(\mathbb{R}^{n}))),
 $$
 and $u(\cdot,\omega)\in \mathcal{C}(0, T; L^2\left(\mathbb{R}^n)\right)$ for $\mathbb{P}$-a.a. $\omega\in\Omega$.
\end{theorem}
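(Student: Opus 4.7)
The plan is to recast \eqref{li} in its mild formulation with the dissipative semigroup $S(t)=e^{-\gamma t}e^{-it(-\Delta)^{\alpha}}$, so that
\begin{equation*}
u(t)=S(t)u_0+i\int_0^t S(t-s)|u|^{2\sigma}u\,ds-i\int_0^tS(t-s)f(s,\cdot,u)\,ds-i\int_0^tS(t-s)\,dW(s),
\end{equation*}
and to perform a Banach fixed point argument in the Strichartz space $L^{r}(0,T;L^{p}(\mathbb{R}^{n}))\cap C(0,T;L^{2}(\mathbb{R}^{n}))$ with the admissible pair $(r,p)$ given in the statement. The three pillars will be: the deterministic fractional Strichartz estimates for $S(t)$, the stochastic Strichartz inequality controlling the stochastic convolution (which requires $\Phi\in L_{HS}$), and Itô's formula applied to $\|u\|_{L^2}^{2}$ to produce an a priori $L^{2}$-bound using the crucial cancellation $\mathbf{Im}(|u|^{2\sigma}u\,\bar{u})=0$ together with the damping and assumption \eqref{assum1}.

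For local existence, since $|u|^{2\sigma}u$ is only locally Lipschitz, I would first introduce a smooth truncation $\theta_{R}$ and solve the truncated equation
\begin{equation*}
u_{R}(t)=S(t)u_0+i\int_0^tS(t-s)\theta_{R}(\|u_{R}\|_{L^{r}(0,s;L^{p})})|u_{R}|^{2\sigma}u_{R}\,ds-i\int_0^tS(t-s)f\,ds-i\int_0^tS(t-s)\,dW(s)
\end{equation*}
by Picard iteration. Strichartz duality and H\"older give a nonlinear estimate of the form $\||u|^{2\sigma}u\|_{L^{r'}(0,T;L^{p'})}\lesssim T^{\kappa}\|u\|_{L^{r}(0,T;L^{p})}^{2\sigma+1}$ with $\kappa>0$, where positivity of $\kappa$ is exactly the subcritical condition $0\leq\sigma<\frac{2\alpha}{n-2\alpha}$ and the choice $r=\frac{4(\sigma+1)\alpha}{n\sigma}$, $p=2\sigma+2$. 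The $f$ term is handled via \eqref{assum2}--\eqref{assum3} combined with the integrability $\psi_{2},\psi_{4}\in L^{\frac{nr}{4\alpha}}$ and $\psi_{3}\in L^{r}_{loc}(L^{p'})$. The stochastic Strichartz estimate gives a bound in $L^{r}(0,T;L^{p})$ for the stochastic convolution in terms of $\|\Phi\|_{L_{HS}}$. A contraction on a small ball yields the unique truncated solution on $[0,T]$.

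Next I would define the stopping times $\tau_{R}:=\inf\{t\in[0,T]:\|u_{R}\|_{L^{r}(0,t;L^{p})}>R\}$, verify the solutions are consistent for $t<\tau_{R}$, and let $\tau_{\infty}=\lim_{R\to\infty}\tau_{R}$. To prove $\tau_{\infty}=T$ almost surely, I would apply Itô's formula to $\|u_{R}(t\wedge\tau_{R})\|_{L^{2}}^{2}$: the nonlinear term contributes zero thanks to $\mathbf{Im}(|u|^{2\sigma}u\,\bar{u})=0$, the damping yields $-2\gamma\|u_{R}\|_{L^{2}}^{2}$, the $f$ term is estimated through \eqref{assum1}, and the martingale term has quadratic variation bounded by $\|\Phi\|_{L_{HS}}^{2}$. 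Taking $\rho/2$ powers, using the Burkholder--Davis--Gundy inequality, and applying Gronwall yields a uniform $L^{\rho}(\Omega;L^{\infty}(0,T;L^{2}))$ bound independent of $R$. A second use of the Strichartz estimates upgrades this to a uniform bound in $L^{1}(\Omega;L^{r}(0,T;L^{p}))$, which precludes blow-up and forces $\tau_{R}\uparrow T$; uniqueness follows from the same Strichartz contraction argument applied to the difference of two solutions.

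The main obstacle will be the $L^{2}$-low-regularity framework: the deterministic fractional NLS is ill-posed in $L^{2}$ in many critical regimes, so the entire argument hinges on the strictly subcritical condition $\sigma<\frac{2\alpha}{n-2\alpha}$, which alone makes the exponent $\kappa$ strictly positive and enables both the contraction and the passage from local to global existence. A second delicate point is that, because $\mathbb{R}^{n}$ is unbounded, no Sobolev compactness is available and all gains must come from the dispersive smoothing in the fractional Strichartz inequalities combined with the Hilbert--Schmidt hypothesis on $\Phi$; controlling the stochastic convolution in the Strichartz norm on $\mathbb{R}^{n}$ is the most technical input and is exactly what replaces the $L^{2}$ ill-posedness of the deterministic model with a well-posed stochastic theory.
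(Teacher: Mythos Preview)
Your proposal is correct and follows essentially the same route as the paper: truncate the nonlinearity, run a Strichartz-based contraction to obtain a local solution with the $L^2$ blow-up alternative (the paper's Proposition~\ref{theorem1}, modelled on de~Bouard--Debussche), then apply It\^o's formula to $\|u\|_{L^2}^{2}$ and close with BDG and Gr\"onwall (the paper's Proposition~\ref{pro1} and \S3.2). One clarification: the loss-free fractional Strichartz estimates you invoke hold only for \emph{radial} data when $\alpha<1$ (this is Lemma~\ref{0estimate} in the paper and the reason the theorem assumes radial initial data); it is the radial restriction---not the stochastic convolution---that sidesteps the $L^{2}$ ill-posedness of the deterministic fractional NLS, so your final paragraph misidentifies the mechanism even though your actual proof steps are fine.
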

\begin{remark}
To compare our result with previous work in the literature (see e.g., \cite{RW21}), we rigorously provide the range of values for the indicator parameter $\sigma$ to ensure the well-posedness of the global mild solution.  
\end{remark}
The following result gives a sufficient criterion for the existence of weak $\mathcal{D}$-pullback mean random attractors in $L^\rho\left(\Omega; L^2\left(\mathbb{R}^n\right)\right)$. We also require the following assumption about the function $\psi_1$:
\begin{equation}\label{assum4}
    \int^{\tau}_{-\infty}e^{(\gamma-\beta)\varrho s/2}
\|\psi_1\|^{\varrho/2}_{L^1_{x}}ds<\infty, \ \forall \varrho\in\mathbb{R}.
\end{equation}
\begin{theorem}
\label{theorem2}
Suppose that the conditions of Theorem \ref{theorem0} and \eqref{assum4} hold. If $\gamma>\beta$, then the stochastic equation \eqref{li} posses a unique weak $\mathcal{D}$-pullback random attractors $\mathcal{A}=\left\{\mathcal{A}: \varrho \in \mathbb{R}\right\}\in \mathcal{D}$ in $L^\rho\left(\Omega; L^2\left(\mathbb{R}^n\right)\right)$; that is,
\begin{description}
\item[(i)] $\mathcal{A}(\varrho)$ is weakly compact in $L^\rho\left(\Omega; L^2\left(\mathbb{R}^n\right)\right)$ for all $\varrho \in \mathbb{R}$.
\item[(ii)] $\mathcal{A}$ is a $\mathcal{D}$-pullback weakly attracting set of $\Phi$.
\item[(iii)] $\mathcal{A}$ is the minimal element of $\mathcal{D}$ with properties $(i)$ and $(ii)$.
\end{description}
\end{theorem}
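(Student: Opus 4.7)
The plan is to invoke the abstract existence criterion from \cite{BW19}, which reduces the problem to exhibiting a closed, weakly compact $\mathcal{D}$-pullback absorbing set for the mean random dynamical system $\Phi(t,\varrho)u_\varrho := u(t+\varrho;\varrho,u_\varrho)$ defined by the global mild solution from Theorem \ref{theorem0}. Since $L^\rho(\Omega; L^2(\mathbb{R}^n))$ is reflexive for $\rho \geq r > 1$, any closed ball is weakly compact, so the task reduces to finding a radius function $R(\varrho)$ such that $K(\varrho):=\{v\in L^\rho(\Omega; L^2(\mathbb{R}^n)): \mathbb{E}\|v\|_{L^2}^\rho \leq R(\varrho)^\rho\}$ eventually absorbs every $D\in\mathcal{D}$ under pullback.

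The core analytic step is the $L^\rho(\Omega;L^2)$ energy estimate. Applying It\^o's formula to $\|u(t)\|_{L^2}^2$, the self-adjointness of $(-\Delta)^\alpha$ kills the dispersive drift, and $\mathbf{Im}\bigl(|u|^{2\sigma}u\,\bar u\bigr)=0$ kills the focusing/defocusing term, leaving
\begin{equation*}
d\|u\|_{L^2}^2 + 2\gamma\|u\|_{L^2}^2\,dt = 2\,\mathbf{Im}\!\int_{\mathbb{R}^n}\! f(t,x,u)\bar u\,dx\,dt + \|\Phi\|_{L_{HS}}^2\,dt + 2\,\mathbf{Im}\!\int_{\mathbb{R}^n}\!\bar u\,dW.
\end{equation*}
Bound the $f$-drift by $2\beta\|u\|_{L^2}^2+2\|\psi_1\|_{L^1_x}$ using \eqref{assum1}, and observe that the hypothesis $\gamma>\beta$ yields an effective damping rate $2(\gamma-\beta)>0$. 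Next, apply It\^o to $e^{ct}\|u(t)\|_{L^2}^\rho$ with $c:=(\gamma-\beta)\rho/2$, take expectations, use the Burkholder-Davis-Gundy inequality to control the stochastic integral, and absorb the $\|u\|^{\rho-2}\|\Phi\|^2$ type terms into the damping via Young's inequality. The outcome is
\begin{equation*}
\mathbb{E}\|\Phi(t,\varrho-t)u_{\varrho-t}\|_{L^2}^\rho \leq e^{-ct}\,\mathbb{E}\|u_{\varrho-t}\|_{L^2}^\rho + C\!\int_{\varrho-t}^{\varrho}\! e^{-c(\varrho-s)}\bigl(\|\psi_1(s)\|_{L^1_x}^{\rho/2}+\|\Phi\|_{L_{HS}}^\rho\bigr)ds.
\end{equation*}

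By the temperedness of $D\in\mathcal{D}$, the first term vanishes as $t\to\infty$, while \eqref{assum4} (with $\varrho$ playing the role of the exponent) guarantees that the second term converges to a finite $R(\varrho)^\rho$. Thus $K(\varrho)=\{v:\mathbb{E}\|v\|_{L^2}^\rho\leq 2R(\varrho)^\rho\}$ is the required weakly compact $\mathcal{D}$-pullback absorbing set, and \cite{BW19} then delivers existence and uniqueness of the weak $\mathcal{D}$-pullback mean random attractor $\mathcal{A}$.

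The main obstacle is the rigorous justification of the It\^o expansion of $\|u\|_{L^2}^\rho$ for the mild solution supplied by Theorem \ref{theorem0}: one only has $u\in L^\rho(\Omega; L^\infty(0,T;L^2))\cap L^1(\Omega; L^r(0,T;L^p))$ and no intrinsic access to a smoother approximation. This is handled by approximating either through Yosida-regularized drifts and finite-dimensional noise or through Galerkin truncation, verifying the identity on approximants (where standard It\^o calculus applies), and then passing to the limit using the deterministic and stochastic Strichartz estimates already exploited in the proof of Theorem \ref{theorem0}. A secondary technical point is ensuring that the exponential weight $e^{ct}$ interacts correctly with Fubini and BDG so that the contraction rate $c=(\gamma-\beta)\rho/2$ is not degraded when one moves from the pathwise inequality to the expectation bound.
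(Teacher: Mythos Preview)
Your proposal is correct and follows essentially the same route as the paper: It\^o's formula on $\|u\|_{L^2}^{2m}$ (the paper takes $m=\rho/2$ and then Gr\"onwall rather than building in the weight $e^{ct}$), Young's inequality to absorb the $\|\Phi\|_{L_{HS}}^{2}$ and $\|\psi_1\|_{L^1_x}$ terms into the effective damping $\gamma-\beta>0$, and construction of a closed ball as the weakly compact $\mathcal{D}$-pullback absorbing set before invoking \cite[Theorem~2.13]{BW19}. Two minor remarks: after taking expectation the stochastic integral vanishes as a martingale, so BDG is not actually needed here; and you should also record that the absorbing family $K=\{K(\varrho)\}$ itself belongs to $\mathcal{D}$, i.e.\ $e^{(\gamma-\beta)\varrho}R(\varrho)^\rho\to 0$ as $\varrho\to-\infty$, which the paper checks explicitly and which is required by the abstract criterion.
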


\section{Existence of global solution}
\subsection{Radial local theory}
In this subsection, we use the radial Strichartz estimates and contraction mapping theorem in a suitable space to prove the local well-posedness of the equation \eqref{li}. Then based on the uniform estimates of the mass, the existence and uniqueness of a global solution is established. Here we only provide a brief proof, and more details can be found in \cite{BD99}. First, we recall radial Strichartz estimates for the fractional Schr\"odinger equation. Let the unitary group $S(t):=e^{-it(-\Delta)^{\alpha}}$.

\begin{lemma}
\label{0estimate}
For $n \geq 2$ and $\alpha \in \left[\frac{n}{2n-1}, 1\right)$, there exists a positive constant $C$ such that the following estimates hold:
  \begin{equation*}
  \begin{aligned}
   \left\|S(t)g\right\|_{L^{p}(\mathbb{R}, L^{q}{(\mathbb{R}^n)})}&\leq C \|g\|_{L^2{(\mathbb{R}^n})},\\
  \left\|\int_{0}^{t} S(t-\tau) h(\tau) d \tau\right\|_{L^{p}\left(\mathbb{R}, L^{q}{(\mathbb{R}^n)}\right)} & \leq C \|h\|_{L^{a^{\prime}}(\mathbb{R}, L^{b^{\prime}}{(\mathbb{R}^n)})},
  \end{aligned}
  \end{equation*}
  where $g$ and $h$ are radially symmetric and $(p, q),(a^{\prime}, b^{\prime})$ satisfy the fractional admissible condition,
\begin{equation*}
  p \in[2, \infty], \quad q \in[2, \infty), \quad(p, q) \neq\left(2, \frac{4 n-2}{2 n-3}\right), \quad \frac{2\alpha}{p}+\frac{n}{q}=\frac{n}{2}.
\end{equation*}
\end{lemma}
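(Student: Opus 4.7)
The plan is to reduce both bounds to a single frequency-localized radial dispersive estimate for $S(t)=e^{-it(-\Delta)^{\alpha}}$ and then pass from dispersive to Strichartz by the $TT^{*}$ method, closing the argument with the Christ--Kiselev lemma for the inhomogeneous piece. The radiality hypothesis is essential: without it the range $p=2$ with $\alpha<1$ is forbidden in dimension $n\ge 2$, but restricting to radial data produces an extra gain from the sphere dimension and allows us to push down to the endpoint admissibility line $\frac{2\alpha}{p}+\frac{n}{q}=\frac{n}{2}$.

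First I would Littlewood--Paley decompose $g=\sum_{N\in 2^{\mathbb{Z}}} P_N g$ and compute the kernel of $S(t)P_N$ on radial inputs. Writing the Fourier representation of a radial function via the Bessel function $J_{(n-2)/2}$, the kernel collapses to a one-dimensional oscillatory integral with phase $\phi(\rho)=t\rho^{2\alpha}+r\rho$. Van der Corput / stationary phase applied to this integral, using the non-degeneracy $|\phi''(\rho)|\sim \alpha(2\alpha-1)\rho^{2\alpha-2}$ on the dyadic shell $\rho\sim N$, combined with $|J_\nu(s)|\lesssim s^{-1/2}$, produces a frequency-localized dispersive bound schematically of the form
\begin{equation*}
\|S(t)P_N g\|_{L^{\infty}_x}\lesssim N^{\,n-\alpha}(1+N^{2\alpha-1}|t|)^{-(n-1)/2}\|P_N g\|_{L^1_x},
\end{equation*}
which improves on the non-radial estimate. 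The lower bound $\alpha\ge \frac{n}{2n-1}$ is exactly what is needed so that when this dispersive bound is balanced against the trivial $L^2$ conservation $\|S(t)P_Ng\|_{L^2_x}=\|P_Ng\|_{L^2_x}$, the endpoint admissible pair $(p,q)=(2,\frac{2n}{n-2\alpha})$ survives with integrable kernel behavior in time.

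Second, I would run the $TT^{*}$ argument on the operator $T\colon L^2_x\to L^p_tL^q_x$, $Tg=S(t)g$. The composition $TT^{*}$ has kernel $S(t-s)$, and the bound in $L^{p'}_tL^{q'}_x\to L^p_tL^q_x$ reduces to interpolating the dispersive estimate between $(L^1_x,L^\infty_x)$ and $(L^2_x,L^2_x)$ and then applying the Hardy--Littlewood--Sobolev inequality in the time variable. This yields the homogeneous estimate for every fractional admissible pair except the single Keel--Tao endpoint $(2,\frac{4n-2}{2n-3})$ excluded in the statement, corresponding to the borderline HLS exponent where the weak-type analysis fails. Reassembly over the dyadic blocks uses the Littlewood--Paley square-function theorem on $L^q_x$, which requires $q<\infty$. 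For the retarded inhomogeneous bound with distinct pairs $(p,q)$ and $(a',b')$, I would invoke the Christ--Kiselev lemma to replace the full time integral produced by $TT^{*}$ duality with $\int_0^t$; the hypothesis $p>a$ (that is, $a'>p'$) needed for Christ--Kiselev holds in the full admissible range away from the forbidden endpoint.

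The main obstacle is the sharp radial dispersive estimate together with the correct identification of the admissible range: the stationary phase step degenerates near $\rho=0$ when $\alpha<\tfrac12$ and near the Bessel transition region $s\sim \nu$, so both regions require a careful separate treatment with refined van der Corput / oscillatory-integral bounds, and it is precisely this analysis that selects the threshold $\alpha\ge \frac{n}{2n-1}$. Handling the forbidden endpoint, and proving that the abstract $TT^{*}$ bound indeed upgrades to the stated inhomogeneous estimate with two different admissible pairs while avoiding a logarithmic loss, is the other technical core; everything else (Littlewood--Paley reassembly, duality, Christ--Kiselev) is standard once the dispersive estimate is in hand.
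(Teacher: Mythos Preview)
The paper does not actually prove this lemma: it is introduced with the phrase ``we recall radial Strichartz estimates for the fractional Schr\"odinger equation'' and is then used as a black box, the implicit references being the existing literature on radial Strichartz estimates for fractional dispersion (e.g.\ Cho--Hwang--Kwon--Lee and Dinh, both in the bibliography). So there is no in-paper proof to compare your proposal against.

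That said, your sketch is a faithful outline of how those original sources establish the result: frequency-localized radial dispersive decay obtained from the Bessel-function representation of the radial Fourier transform together with van der Corput/stationary phase, followed by the $TT^{*}$ argument and Christ--Kiselev for the retarded inhomogeneous estimate. Your identification of the threshold $\alpha\ge \tfrac{n}{2n-1}$ with the point where the improved radial decay is just strong enough for the $p=2$ endpoint is correct; indeed the excluded pair $\bigl(2,\tfrac{4n-2}{2n-3}\bigr)$ sits on the admissibility line $\tfrac{2\alpha}{p}+\tfrac{n}{q}=\tfrac{n}{2}$ exactly when $\alpha=\tfrac{n}{2n-1}$, so the exclusion in the statement is only active at that boundary value of $\alpha$. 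One small caution: Christ--Kiselev requires $a'<p$, which fails when both admissible pairs are taken at $p=a=2$; at a genuine double endpoint one must invoke the Keel--Tao bilinear/atomic argument directly rather than Christ--Kiselev, so your last paragraph slightly overstates what Christ--Kiselev alone delivers.
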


Next, we give the local well-posedness of the stochastic fractional nonlinear Schr\"odinger equation \eqref{li} with radially symmetric initial data in the space $L^{2}\left(\mathbb{R}^{n}\right)$.
\begin{proposition}(Radial local theory).
\label{theorem1}
 Let $n \geq 2$, $\alpha \in\left[\frac{n}{2 n-1}, 1\right)$, $0\leq\sigma<\frac{2 \alpha}{n-2 \alpha}$ and $\Phi\in L_{HS}$. Assume that $(r,p)=(\frac{4(\sigma+1)\alpha}{n\sigma}, 2\sigma+2)$, and that the radial initial data $u_0\in L^{\rho}\left(\Omega, \mathcal{F}_0;L^{2}(\mathbb{R}^{n})\right)$ for $\rho\geq r$. Then under Assumption $(\bf{A_{f}})$, there exist a stopping time $\tau^*\left(u_0, \omega\right)$ and a unique solution $u\in L^{\rho}(\Omega; L^{\infty}(0, \tau; L^{2}(\mathbb{R}^{n})) \cap L^{1}(\Omega; L^{r}(0, \tau; L^{p}(\mathbb{R}^{n})))$ to equation \eqref{li} for any $\tau<\tau^*\left(u_0, \omega\right)$. Moreover, we have almost surely,
\begin{equation*}
\tau^*\left(u_0, \omega\right)=+\infty \quad \text { or } \quad \limsup _{t\to\tau^*\left(u_0, \omega\right)}\|u(t)\|_{L^{2}}=+\infty.
\end{equation*}
\end{proposition}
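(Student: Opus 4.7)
The plan is to prove Proposition~\ref{theorem1} by a stopping-time truncated Banach fixed-point argument in a mixed Strichartz space. Rewriting \eqref{li} in mild form, using that $e^{-\gamma t}S(t)$ is the semigroup generated by the linear damped operator $-(i(-\Delta)^{\alpha}+\gamma)$, a mild solution must satisfy
\begin{equation*}
u(t)=e^{-\gamma t}S(t)u_0+i\int_0^t e^{-\gamma(t-s)}S(t-s)|u|^{2\sigma}u\,ds-i\int_0^t e^{-\gamma(t-s)}S(t-s)f(s,\cdot,u)\,ds-i\int_0^t e^{-\gamma(t-s)}S(t-s)\,dW(s).
\end{equation*}
I would work in the radially symmetric subspace of $X_\tau:=L^\infty(0,\tau;L^2(\mathbb{R}^n))\cap L^r(0,\tau;L^p(\mathbb{R}^n))$, which is invariant under $S(t)$ (radiality is preserved by any Fourier multiplier) and under each Duhamel term. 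A quick check $\frac{2\alpha}{r}+\frac{n}{p}=\frac{n}{2}$ with $(r,p)=\bigl(\tfrac{4(\sigma+1)\alpha}{n\sigma},2\sigma+2\bigr)$ confirms this pair is the Strichartz pair matched to the nonlinearity $|u|^{2\sigma}u$.

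The next step is to collect three estimates on $X_\tau$. The linear Strichartz bound from Lemma~\ref{0estimate} gives $\|e^{-\gamma t}S(t)u_0\|_{X_\tau}\leq C\|u_0\|_{L^2}$. For the cubic-type Duhamel term, spatial H\"older with $(2\sigma+1)p'=p$ followed by H\"older in time on the subcritical gap permitted by $\sigma<\tfrac{2\alpha}{n-2\alpha}$ yields
\begin{equation*}
\Bigl\|\int_0^t e^{-\gamma(t-s)}S(t-s)|u|^{2\sigma}u\,ds\Bigr\|_{X_\tau}\leq C\tau^{\theta}\|u\|_{L^r(0,\tau;L^p)}^{2\sigma+1},
\end{equation*}
for some $\theta>0$; the forcing contribution is absorbed similarly, using the growth bound \eqref{assum2} together with the integrability of $\psi_2,\psi_3$ imposed in $(\mathbf{A}_f)$. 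For the stochastic convolution I would invoke a Burkholder--Davis--Gundy inequality for Banach-valued stochastic integrals combined with the deterministic Strichartz bound (the stochastic Strichartz estimate) to get
\begin{equation*}
\mathbb{E}\Bigl\|\int_0^t e^{-\gamma(t-s)}S(t-s)\,dW(s)\Bigr\|_{X_\tau}^{\rho}\leq C\tau^{\rho/2}\|\Phi\|_{L_{HS}}^{\rho}.
\end{equation*}

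With these in hand, the contraction step is standard. For $R>0$ I would introduce the stopping time $\tau_R:=\inf\{t\geq 0:\|u\|_{L^r(0,t;L^p)}\geq R\}$, truncate the nonlinearity past $\tau_R$, and apply the Banach fixed-point theorem on the closed ball of radius proportional to $\|u_0\|_{L^\rho(\Omega;L^2)}+\|\Phi\|_{L_{HS}}$ in $L^\rho(\Omega;X_\tau)$. Choosing $\tau$ small enough in terms of $R$ forces both the self-map and Lipschitz constants to be $\leq 1/2$. The fixed point gives a unique mild solution up to $\tau_R\wedge\tau$; patching along an increasing sequence of such stopping times produces a maximal existence time $\tau^{\ast}$. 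The blow-up alternative then follows because if $\|u(t)\|_{L^2}$ remained bounded as $t\uparrow\tau^{\ast}$, one could restart the local theory from $u(\tau^{\ast}-\delta)$ with a uniform lifespan and contradict maximality.

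The main obstacle I anticipate is the stochastic Strichartz estimate in the non-Hilbert norm $L^r(0,\tau;L^p)$ with $p=2\sigma+2>2$: the It\^o isometry is unavailable, so one must combine a Banach-valued Burkholder inequality (or the Da~Prato--Zabczyk factorization method) with the Hilbert--Schmidt hypothesis on $\Phi$ to transport the covariance bound through the Strichartz kernel on the unbounded domain $\mathbb{R}^n$. A secondary subtlety is that the H\"older time exponent $\theta$ in the nonlinear estimate degenerates as $\sigma$ approaches the threshold $\tfrac{2\alpha}{n-2\alpha}$, so one must exploit the freedom to interpolate between $L^\infty_tL^2_x$ and $L^r_tL^p_x$ (or to use alternative Strichartz admissible pairs available from Lemma~\ref{0estimate}) in order to keep $\theta>0$ across the full admissible range.
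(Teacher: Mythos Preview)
Your strategy matches the paper's: mild formulation with the damped group $e^{-\gamma t}S(t)$, a smooth cut-off $\theta_R$ of the nonlinearity in the running $L^r_tL^p_x$ norm, and a contraction in $X_\tau=L^\infty_tL^2_x\cap L^r_tL^p_x$ via the radial Strichartz estimates of Lemma~\ref{0estimate}. The paper in fact gives only a sketch, deferring the treatment of the power nonlinearity and the patching to a maximal stopping time entirely to de~Bouard--Debussche~\cite{BD99}, and concentrates instead on the two ingredients that are new relative to that reference: the stochastic convolution and the forcing $f$ (for which it uses \eqref{assum2}--\eqref{assum3} and H\"older exactly as you outline).

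Where you diverge is the stochastic convolution, which you flag as your main obstacle and propose to treat by a Banach-valued Burkholder--Davis--Gundy inequality or the factorization method. The paper's route is substantially simpler: because the noise is \emph{additive}, $z(t)=i\int_0^t S(t-s)\,dW(s)$ is a Gaussian process, and one can use the pointwise Gaussian moment comparison $\mathbb{E}|z(t,x)|^p\leq C_p\bigl(\mathbb{E}|z(t,x)|^2\bigr)^{p/2}$ to reduce $\mathbb{E}\int_0^{T_0}\|z(s)\|_{L^p_x}^r\,ds$ to a second-moment (covariance) bound, which then follows from $\Phi\in L_{HS}$ together with the Strichartz mapping $S(t):L^2\to L^r_tL^p_x$. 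No Banach-space stochastic calculus is needed; the Gaussianity shortcut (already in \cite{BD99}) dissolves precisely the difficulty you anticipated. A related, minor difference: the paper, following \cite{BD99}, runs the contraction \emph{pathwise} for the truncated equation once $z(\cdot,\omega)\in X_\tau$ almost surely, rather than working directly in $L^\rho(\Omega;X_\tau)$ as you propose; either variant works.
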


\begin{proof}
We follow the approach of \cite[Theorem 2.1]{BD99}. For the damping term $i\gamma u$, we can define a new free operator $T_{\gamma,\alpha}\phi:=e^{-\gamma t}\mathcal{F}^{-1}(e^{-it|y|^{2\alpha}})*\phi$. It has the same Strichartz estimates as the unitary group $S(t)$ (see \cite{Sa15}, Proposition 2.7 and 2.9). So we can ignore the impact of the damping term and still use the unitary group $S(t)$ to represent the mild form of the equation \eqref{li} for simplicity.

Let $\theta \in \mathcal{C}_{0}^{\infty}(\mathbb{R})$ with $ \supp \theta \subset(-2,2), \theta(x)=1$ for $x \in[-1,1]$ and $0 \leq \theta(x) \leq 1$ for
$x \in \mathbb{R}$. Let $R>0$ and $\theta_{R}(x)=\theta\left(\frac{x}{R}\right)$. Let us fix $R\geq 1$.
We recall the truncated equation of equation \eqref{li}, that is
\begin{equation*}
\label{ut}
\begin{aligned}
u(t)=& S(t) u_{0}+i \int_{0}^{t} S(t-s)\left(\theta_{R}\left(|u|_{L^{r}\left(0, s ; L_{x}^{p}\right)}\right) |u(s)|^{2\sigma}u(s)\right) ds
-i \int_{0}^{t} S(t-s) d W(s)\\
&-\gamma \int_{0}^{t} S(t-s) u(s) ds-i\int_{0}^{t} S(t-s)\left(\theta_{R}\left(|u|_{L^{r}\left(0, s ; L_{x}^{p}\right)}\right) f\right)ds.
\end{aligned}
\end{equation*}
By using a fixed point iteration argument in a ball of $X_{T_0}$ for the norm in $L^{\infty}(0, T_{0}; L^{2}(\mathbb{R}^{n}))\cap L^{r}(0, T_{0}; L^{p}(\mathbb{R}^{n}))$, we can prove this theorem. Here we only need to prove that the different terms $i \int_{0}^{t} S(t-s) d W(s)$ and $i\int_{0}^{t} S(t-s)\left(\theta_{R}\left(|u|_{L^{r}\left(0, T_0 ; L_{x}^{p}\right)}\right) f\right)ds$ belong to the space $L^{\infty}\left(0, T_{0}; L^{2}\left(\mathbb{R}^{n}\right)\right)\cap L^{r}\left(0, T_{0}; L^{p}\left(\mathbb{R}^{n}\right)\right)$.

For stochastic integral $z(t):=i \int_{0}^{t} S(t-s) d W(s)$, it is standard that under our assumptions it is well-defined and has paths in $C\left([0,\infty);L^2(\mathbb{R}^n)\right)$(see \cite{DG92}, Theorem 6.10). And using the same method in \cite [Theorem 3.1]{BD99}, we also prove that it belongs to the space $L^{r}\left(0, T_{0}; L^{p}\left(\mathbb{R}^{n}\right)\right)$. In fact, since $z$ is a Gaussian process and $r>2$, by H\"older inequality, and  moment property for Gaussian process, we have
\begin{equation}
\begin{aligned}
\mathbb{E}\left(\int^{T_0}_{0}|z(s)|^{r}_{L^{p}(\mathbb{R}^n)}ds\right)&=\int^{T_0}_{0}\mathbb{E}\left(|z(s)|^{r}_{L^{p}(\mathbb{R}^n)}\right)ds
\leq C_1 \int^{T_0}_{0}\left(\mathbb{E}|z(s)|^{p}_{L^{p}(\mathbb{R}^n)}\right)^{\frac{r}{p}}ds \\
& \leq C_2 \int^{T_0}_{0}\left(\int_{\mathbb{R}^n}\left[\mathbb{E}(|z(s)|^2)\right]^{\frac{p}{2}}dx\right)^{\frac{r}{p}}ds,\\
\end{aligned}
\end{equation}
where $C_1$ and $C_2$ depend on $p$ and $r$.\\
By the results in \cite [Theorem 3.1]{BD99} and the semigroup $S(t)$ maps $L^2(\mathbb{R}^n)$ into $L^{r}(0,T_0; L^p(\mathbb{R}^n))$ for any $T_0\geq 0$, we have
\begin{equation}
\mathbb{E}\left(\int^{T_0}_{0}|z(s)|^{r}_{L^{p}(\mathbb{R}^n)}ds\right)\leq C_3 \sum_{i\in \mathbb{N}}|\Phi e_i|^2_{L^2(\mathbb{R}^n)}<\infty,
\end{equation}
where $C_3$ depends on $T_0,r$ and $p$.

For the term $i\int_{0}^{t} S(t-s)\left(\theta_{R}\left(|u|_{L^{r}\left(0, T_0 ; L_{x}^{p}\right)}\right) f\right)ds$, using the Strichartz estimates, assumption \eqref{assum2} and H\"older inequality, one has
\begin{equation}
\begin{aligned}
  &\left\|i\int_{0}^{t} S(t-s)\left(\theta_{R}\left(|u|_{L^{r}\left(0, T_0 ; L_{x}^{p}\right)}\right) f\right)ds\right\|_{L^r\left(0, T_0 ; L^p(\mathbb{R}^n)\right)}\\
  &\leq C\|f\|_{L^{r^{\prime}}\left(0, T_0 ; L^{p^{\prime}}(\mathbb{R}^n)\right)}\\
  &\leq C\|\psi_2(x)|u|\|_{L^{r^{\prime}}\left(0, T_0 ; L^{p^{\prime}}(\mathbb{R}^n)\right)}+C\|\psi_3\|_{L^{r^{\prime}}\left(0, T_0 ; L^{p^{\prime}}(\mathbb{R}^n)\right)} \\
  &\leq CT_0^{1-2/r}\left(\|\psi_2\|_{L_x^{\frac{nr}{4\alpha}}}\|u\|_{L^r\left(0, T_0 ; L^p(\mathbb{R}^n)\right)}+\|\psi_3\|_{L^r\left(0, T_0 ; L^{p^{\prime}}(\mathbb{R}^n)\right)}\right).
\end{aligned}
\end{equation}
This indicates that the term $i\int_{0}^{t} S(t-s)\left(\theta_{R}\left(|u|_{L^{r}\left(0, T_0 ; L_{x}^{p}\right)}\right) f\right)ds$ belongs to the space $L^{r}\left(0, T_{0}; L^{p}\left(\mathbb{R}^{n}\right)\right)$. Similarly, we can also prove that it belongs to the space $L^{\infty}\left(0, T_0; L^2(\mathbb{R}^n)\right)$.

Next, by the Strichartz estimates, assumptions \eqref{assum2}-\eqref{assum3} and H\"older inequality, we get
\begin{equation}
\begin{aligned}
&\left|\int_{0}^{t} S(t-s)(\theta_{R}\left(|u_1|_{L^{r}\left(0, T_0 ; L_{x}^{p}\right)}\right)f(x,u_1)-\theta_{R}\left(|u_2|_{L^{r}\left(0, T_0 ; L_{x}^{p}\right)}\right)f(x,u_2))ds\right|_{L^r\left(0, T_0; L^p(\mathbb{R}^n)\right)}\\
&\leq |\theta_{R}\left(|u_1|_{L^{r}\left(0, T_0 ; L_{x}^{p}\right)}\right)f(x,u_1)-\theta_{R}\left(|u_2|_{L^{r}\left(0, T_0 ; L_{x}^{p}\right)}\right)f(x,u_2)|_{L^{r^{\prime}}\left(0, T_0; L^{p^{\prime}}(\mathbb{R}^n)\right)}\\
&\leq |\left(\theta_{R}\left(|u_1|_{L^{r}\left(0, T_0 ; L_{x}^{p}\right)}\right)-\theta_{R}\left(|u_2|_{L^{r}\left(0, T_0 ; L_{x}^{p}\right)}\right)\right)f(x,u_2)|_{L^{r^{\prime}}\left(0, T_0; L^{p^{\prime}}(\mathbb{R}^n)\right)}\\
&\quad +|\theta_{R}\left(|u_1|_{L^{r}\left(0, T_0 ; L_{x}^{p}\right)}\right)\left(f(x,u_1)-f(x,u_2)\right)|_{L^{r^{\prime}}\left(0, T_0; L^{p^{\prime}}(\mathbb{R}^n)\right)} \\
&\leq C\|u_1-u_2\|_{L^{r}\left(0, T_0 ; L_{x}^{p}\right)}\|f(x,u_2)\|_{L^{r^{\prime}}\left(0, T_0; L^{p^{\prime}}(\mathbb{R}^n)\right)}\\
&\quad +C\|f(x,u_1)-f(x,u_2)\|_{L^{r^{\prime}}\left(0, T_0; L^{p^{\prime}}(\mathbb{R}^n)\right)}\\
&\leq CT_0^{1-2/r}\left(\|\psi_2\|_{L_x^{\frac{nr}{4\alpha}}}\|u\|_{L^r\left(0, T_0 ; L^p(\mathbb{R}^n)\right)}+\|\psi_3\|_{L^r\left(0, T_0 ; L^{p^{\prime}}(\mathbb{R}^n)\right)}\right)\|u_1-u_2\|_{L^{r}\left(0, T_0 ; L_{x}^{p}\right)}\\
&\quad +C\|\psi_4(x)(u_1-u_2)\|_{L^{r^{\prime}}\left(0, T_0; L^{p^{\prime}}(\mathbb{R}^n)\right)}\\
&\leq CT_0^{1-2/r}\left(\|\psi_2\|_{L_x^{\frac{nr}{4\alpha}}}\|u\|_{L^r\left(0, T_0 ; L^p(\mathbb{R}^n)\right)}+\|\psi_3\|_{L^r\left(0, T_0 ; L^{p^{\prime}}(\mathbb{R}^n)\right)}\right)\|u_1-u_2\|_{L^{r}\left(0, T_0 ; L_{x}^{p}\right)}\\
&\quad+ CT_0^{1-2/r}\|\psi_4\|_{L_x^{\frac{nr}{4\alpha}}}|u_1-u_2|_{L^r\left(0, T_0; L^p(\mathbb{R}^n)\right)}.
\end{aligned}
\end{equation}
The estimate in $L^{\infty}\left(0, T_0; L^2(\mathbb{R}^n)\right)$ is similar.
\end{proof}
Now, we start to give the following result about the behavior of  the mass, which is defined by
$M(u)=\|u\|^2$.

\begin{proposition}
\label{pro1}
Let $u_0$, $\sigma$, $f$ and $\Phi$ be as in Theorem \ref{theorem1}. Moreover, given $T>0$, let $\tau$ be a stopping time with $0<\tau<\min(\tau^{*}, T)$ almost surely. Then, for every $m\geq 1$, we have
\begin{equation}
  \label{mass}
  \begin{aligned}
\|u(t)\|^{2m}=&\|u_0\|^{2m}-2\gamma m\int_{0}^{\tau}\|u(s)\|^{2m}ds-2m\int^{\tau}_{0}\|u(s)\|^{2(m-1)}\operatorname{Im}\left(u(s),f(s)\right) ds\\
  &-2m\int^{\tau}_{0}\|u(s)\|^{2(m-1)}\operatorname{Im}\left(u(s),dW(s)\right)\\
  &+m\|\Phi\|^2_{L_{HS}}\int^{\tau}_{0}\|u(s)\|^{2(m-1)}ds\\
  &+2(m-1)m\int^{\tau}_{0}\|u(s)\|^{2(m-2)}\sum_{j=1}^{\infty}[\operatorname{Im}\left(u(s),\Phi e_j\right)]^2ds.
  \end{aligned}
  \end{equation}
\end{proposition}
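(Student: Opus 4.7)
The plan is to derive the identity by applying It\^o's formula twice: first to $\|u(t)\|^{2}$, and then to $y\mapsto y^{m}$ with $y(t)=\|u(t)\|^{2}$. Because $u$ is only a mild solution and need not pathwise lie in the domain of $(-\Delta)^{\alpha}$, the strong It\^o formula does not apply directly. The standard remedy is to regularize: introduce $u^{\lambda}=J_{\lambda}u$ with $J_{\lambda}=(I+\lambda(-\Delta)^{\alpha})^{-1}$ (or, equivalently, a spectral truncation), so that $u^{\lambda}$ satisfies a regularized strong equation to which the complex It\^o formula applies term by term; then let $\lambda\to 0^{+}$. The a priori bounds from Proposition~\ref{theorem1} -- namely $u\in L^{\rho}(\Omega;L^{\infty}_{t}L^{2}_{x})\cap L^{1}(\Omega;L^{r}_{t}L^{p}_{x})$ up to the stopping time $\tau<\min(\tau^{*},T)$ -- together with Assumption $(\mathbf{A_{f}})$ supply the moment estimates needed to pass to the limit in each term in $L^{1}(\Omega)$.

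For the $m=1$ step, I would rewrite the equation in It\^o form as $du=A\,dt-i\,dW$ with drift $A=-i(-\Delta)^{\alpha}u+i|u|^{2\sigma}u-\gamma u-if$, and expand $d(u\bar u)=\bar u\,du+u\,d\bar u+du\cdot d\bar u$ before integrating over $\mathbb{R}^{n}$. The crucial algebraic cancellations are that $((-\Delta)^{\alpha}u,u)=\|(-\Delta)^{\alpha/2}u\|^{2}$ and $(|u|^{2\sigma}u,u)=\|u\|_{L^{2\sigma+2}}^{2\sigma+2}$ are both real, so after multiplication by $\pm i$ their contribution to $\operatorname{Re}(A,u)$ vanishes, leaving only $\operatorname{Re}(A,u)=-\gamma\|u\|^{2}-\operatorname{Im}(u,f)$. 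The noise contributes the martingale $-2\operatorname{Im}(u,dW)$ and, via $du\cdot d\bar u=(dW)^{2}$, the It\^o correction $\|\Phi\|^{2}_{L_{HS}}\,dt$. Collecting these yields
\begin{equation*}
d\|u\|^{2}=\Bigl(-2\gamma\|u\|^{2}-2\operatorname{Im}(u,f)+\|\Phi\|^{2}_{L_{HS}}\Bigr)dt-2\operatorname{Im}(u,dW).
\end{equation*}

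For general $m\geq 1$, apply the one-dimensional It\^o formula to $\phi(y)=y^{m}$ with $y(t)=\|u(t)\|^{2}$. The first-order term multiplies the $m=1$ identity by $m\,y^{m-1}$, while the quadratic-variation correction $\tfrac{1}{2}\phi''(y)\,d\langle y\rangle_{t}$, combined with $d\langle y\rangle_{t}=4\sum_{j}[\operatorname{Im}(u,\Phi e_{j})]^{2}\,dt$ coming from the only martingale part of $y$, produces exactly the final term $2m(m-1)\|u\|^{2(m-2)}\sum_{j}[\operatorname{Im}(u,\Phi e_{j})]^{2}\,dt$. Integrating from $0$ to $\tau$ gives the six-term identity in the statement. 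The principal technical obstacle I anticipate is this regularization limit: one must verify that the nonlinear integrand $\|u^{\lambda}\|^{2(m-1)}\operatorname{Im}(u^{\lambda},f(\cdot,u^{\lambda}))$ converges in $L^{1}(\Omega\times(0,\tau))$, which follows from the growth bound $|f|\leq \psi_{2}|u|+\psi_{3}$ in \eqref{assum2}, H\"older estimates compatible with the Strichartz pair $(r,p)$, and the uniform bound on $\|u\|_{L^{2}}$ up to $\tau^{*}$.
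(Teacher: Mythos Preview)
Your proposal is correct and follows essentially the same approach as the paper: the paper's own proof consists of two sentences stating that the identity follows from a formal application of It\^o's formula to $\|u(t)\|^{2m}$, justified rigorously by inserting truncations and invoking the local well-posedness argument (with a reference to de Bouard--Debussche for details). Your two-step plan (It\^o on $\|u\|^{2}$, then on $y\mapsto y^{m}$) together with the resolvent regularization $J_{\lambda}$ is precisely a fleshed-out version of that sketch, and your algebraic cancellations and correction terms are all computed correctly.
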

\begin{proof}
A formal application of the It\^o formula to
$\|u(t)\|^{2m}$ yields \eqref{mass}. One can justify the computation by inserting several truncations and the local well-posedness argument. See \cite{BD99} for details.
\end{proof}

\subsection {Proof of Theorem \ref{theorem0}}
The proof of this theorem is a minor adaption of the proof of Theorem 2.1 in \cite{BD99}.
We only need to get the uniform boundedness of $\|u\|_{L^2}$ to ensure the global existence of the solution. Letting $m=1$ in the equality \eqref{mass}  and using the assumption \eqref{assum1}, we obtain that for every $T>0$, every stopping time $0<\tau<\min(\tau^{*}, T)$ and every time $t\leq \tau$,
\begin{equation}
\label{real}
\begin{aligned}
\left\|u(t)\right\|^2=&\left\|u_{0}\right\|^2-2\gamma \int_0^t\|u(s)\|^2ds
-2\int_{0}^{t}\rm{Im}\left(u(s), f(s)\right) d s \\
&-2\int^{t}_{0}\operatorname{Im}\left(u(s),dW(s)\right)+\|\Phi\|^2_{L_{HS}}t \\
\leq&\left\|u_{0}\right\|^2-2(\gamma-\beta)\int_0^t\|u(s)\|^2ds+2\|\psi_1\|_{L^1(0,T;{L_{x}^{1}})}\\
&-2\int^{t}_{0}\operatorname{Im}\left(u(s),dW(s)\right)+\|\Phi\|^2_{L_{HS}}t.\\
\end{aligned}
\end{equation}
Taking the supremum and using Burkholder-Davis-Gundy inequality and H\"older inequality yields
\begin{equation}
\begin{aligned}
\mathbb{E}\left[\sup_{t\in[0,T]}\left\|u(t)\right\|^2\right]\leq&\mathbb{E}\left[\left\|u_{0}\right\|^2\right]+2(\gamma-\beta)\mathbb{E}\left[\int_0^T\|u(s)\|^2ds\right]+2\|\psi_1\|_{L^1(0,T;{L_{x}^{1}})}\\
&+C\mathbb{E}\left[\left|\int^{T}_{0}\sum_{i=1}^{\infty}(u(s),\Phi e_i)^2ds\right|^{1/2}\right]+\|\Phi\|^2_{L_{HS}}T\\
\leq & C(T,\|\Phi\|^2_{L_{HS}},\mathbb{E}\left\|u_{0}\right\|^2, \|\psi_1\|_{L^1(0,T;{L_{x}^{1}})})+2(\gamma-\beta)\mathbb{E}\left[\int_0^T\|u(s)\|^2ds\right]\\
&+C(\|\Phi\|^2_{L_{HS}})\mathbb{E}\left[\left|\int_0^T\|u(s)\|^2ds\right|^{1/2}\right].
\end{aligned}
\end{equation}
Using Young's inequality and Gr\"onwall inequality, we obtain
\begin{equation}
\mathbb{E}\left[\sup_{t\in[0,T]}\left\|u(t)\right\|^2\right]\leq C(T,\|\Phi\|^2_{L_{HS}},\mathbb{E}\left\|u_{0}\right\|^2, \|\psi_1\|_{L^1(0,T;{L_{x}^{1}})}).
\end{equation}
These a priori estimations combined with local well-posedness imply the global existence of the unique solution.

\section{Existence for global attractor}

This section is devoted to the existence and uniqueness of weak pullback mean random attractors of the system \eqref{li} in $L^{\rho}\left(\Omega; L^{2}(\mathbb{R}^{n})\right)$.

For every $t\in\mathbb{R}^{+}$ and $\varrho \in \mathbb{R}$, we define a mapping $\Phi(t, \varrho) $ by
\begin{equation}
\Phi(t, \varrho)u_0=u(t+\varrho,\varrho, u_0), ~~\forall u_0 \in L^{\rho}\left(\Omega, \mathcal{F}_{\varrho};L^{2}(\mathbb{R}^{n})\right),
\end{equation}
By the uniqueness of the solution, we find that $\Phi(t+s, \varrho)=\Phi(t, s+\varrho)\circ \Phi(s, \varrho)$ for all $t, s \in \mathbb{R}^{+}$ and $\varrho \in \mathbb{R}$. Then $\Phi$ is a mean random dynamical system for \eqref{li} on $L^{\rho}\left(\Omega; L^{2}(\mathbb{R}^{n})\right)$ in the sense of \cite[Definition 2.9]{BW19}
\subsection{Moment estimates}
Denote by $\Theta$ the collection of all the families  $\mathcal{D}=\{B(\varrho)\subseteq L^{\rho}(\Omega; L^{2}(\mathbb{R}^{n})): \varrho \in \mathbb{R}\}$ be a family of nonempty bounded sets such that
\begin{equation}
\lim_{\varrho \rightarrow -\infty}e^{(\gamma-\beta)\varrho}\left(\sup_{u\in B(\varrho)}\|u\|^2_{L^{\rho}(\Omega; L^{2}(\mathbb{R}^{n})}\right)=0.
\end{equation}
\begin{lemma}
\label{lem2}
Let $u_0$, $\sigma$, $f$ and $\Phi$ be as in Theorem \ref{theorem1}. Assume that $\gamma>\beta$. Then for every $m \geq 1$, $\varrho \in \mathbb{R}$ and $B=\{B(\varrho)\}_{\varrho\in \mathbb{R}}\in \Theta$, there exists $T=T(\varrho, B)>0$ such that for all $t\geq T$, the solution
$u$ to \eqref{li} satisfies
\begin{equation}
\begin{aligned}
\mathbb{E} \|u(\varrho,\varrho-t,u_0)\|^{2m} &\leq \mathbb{E}\left[\|u_0\|^{2m}\right]+ C_1(m)\|\Phi\|_{L_{HS}}^{2 m}(\gamma-\beta)^{-m}\\
&\quad +C_2(m)(\gamma-\beta)^{1-m}e^{-(\gamma-\beta)m\tau}\int^{\tau}_{-\infty}e^{(\gamma-\beta)ms}
\|\psi_1(s)\|^m_{L^1_{x}}ds.\\
  \end{aligned}
\end{equation}
\end{lemma}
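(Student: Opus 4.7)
The plan is to turn the It\^o identity \eqref{mass} of Proposition \ref{pro1} into a dissipative differential inequality, then multiply by the integrating factor $e^{(\gamma-\beta)ms}$ and integrate from the pullback time $\varrho-t$ up to $\varrho$.

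First I would estimate each drift term on the right of \eqref{mass}. Since $\operatorname{Im}(u,f)=-\int_{\mathbb{R}^{n}}\operatorname{Im}(f\bar u)\,dx$, hypothesis \eqref{assum1} gives $-\operatorname{Im}(u,f)\le\beta\|u\|^{2}+\|\psi_1(s)\|_{L^{1}_{x}}$, so the $f$-term in \eqref{mass} is bounded by $2m\|u\|^{2(m-1)}(\beta\|u\|^{2}+\|\psi_1\|_{L^{1}_{x}})$. For the last line of \eqref{mass}, Cauchy--Schwarz applied coordinatewise yields $\sum_{j}[\operatorname{Im}(u,\Phi e_{j})]^{2}\le\|u\|^{2}\|\Phi\|_{L_{HS}}^{2}$, so that contribution is controlled by $2m(m-1)\|u\|^{2(m-1)}\|\Phi\|_{L_{HS}}^{2}$. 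Combined with the damping $-2\gamma m\|u\|^{2m}$, one obtains, in differential form,
\begin{equation*}
d\|u\|^{2m}+2(\gamma-\beta)m\|u\|^{2m}\,ds\le (2m-1)m\|\Phi\|_{L_{HS}}^{2}\|u\|^{2(m-1)}\,ds+2m\|u\|^{2(m-1)}\|\psi_1(s)\|_{L^{1}_{x}}\,ds+dM_s,
\end{equation*}
where $dM_s=-2m\|u\|^{2(m-1)}\operatorname{Im}(u,dW)$.

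Next I would apply Young's inequality in the scaled form $ab\le\epsilon a^{m/(m-1)}+C_m\epsilon^{-(m-1)}b^{m}$ to the two $\|u\|^{2(m-1)}$-terms (the case $m=1$ needs no splitting). Choosing $\epsilon$ proportional to $(\gamma-\beta)/m$ allows half of the dissipative term to absorb both quadratic contributions, leaving
\begin{equation*}
d\|u\|^{2m}+(\gamma-\beta)m\|u\|^{2m}\,ds\le C_1(m)(\gamma-\beta)^{1-m}\|\Phi\|_{L_{HS}}^{2m}\,ds+C_2(m)(\gamma-\beta)^{1-m}\|\psi_1(s)\|_{L^{1}_{x}}^{m}\,ds+dM_s.
\end{equation*}
Multiplying by $e^{(\gamma-\beta)ms}$, integrating from $\varrho-t$ to $\varrho$, taking expectation and finally dividing by $e^{(\gamma-\beta)m\varrho}$ produces the three contributions in the conclusion: the constant $\|\Phi\|$-piece picks up $\int_{\varrho-t}^{\varrho}e^{-(\gamma-\beta)m(\varrho-s)}\,ds\le 1/((\gamma-\beta)m)$, producing the factor $(\gamma-\beta)^{-m}$; the $\psi_1$-piece retains its time profile and yields exactly the weighted convolution written in the statement, which is finite by \eqref{assum4}; and the initial term becomes $e^{-(\gamma-\beta)mt}\mathbb{E}\|u_0\|^{2m}$, which because $B\in\Theta$ can be made no larger than $\mathbb{E}\|u_0\|^{2m}$ once $t\ge T(\varrho,B)$.

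The main obstacle I anticipate is the rigorous removal of the stochastic term, since $M_s$ is a priori only a local martingale. I would localize via the stopping times $\tau_N=\inf\{s\ge\varrho-t:\|u(s)\|\ge N\}$, run the argument on $[\varrho-t,\tau_N\wedge\varrho]$, where the stopped integral is a genuine martingale of zero expectation, and then send $N\to\infty$ using Fatou's lemma on the left-hand side and the monotone convergence theorem on the deterministic integrals on the right. Theorem \ref{theorem0} gives $\tau_N\to\infty$ almost surely and supplies the integrability required to justify passing to the limit; the only remaining effort is careful bookkeeping of the $m$-dependent constants produced by Young's inequality so that the $(\gamma-\beta)^{-m}$ and $(\gamma-\beta)^{1-m}$ scalings come out exactly as stated.
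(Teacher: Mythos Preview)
Your proposal is correct and follows essentially the same route as the paper: apply the It\^o identity for $\|u\|^{2m}$, bound the $f$-term via \eqref{assum1} and the quadratic-variation term by $\|\Phi\|_{L_{HS}}^{2}\|u\|^{2}$, use Young's inequality with parameters of order $(\gamma-\beta)$ to absorb the $\|u\|^{2(m-1)}$ factors into the dissipation, and then integrate (via the factor $e^{(\gamma-\beta)ms}$, equivalently Gr\"onwall) over $[\varrho-t,\varrho]$. The only cosmetic difference is that the paper takes expectation first and works with the scalar ODE $\frac{d}{dt}\mathbb{E}\|u\|^{2m}$, simply asserting the stochastic integral is a martingale, whereas you keep the stochastic term and remove it afterwards via a stopping-time localization; your version is in fact the more rigorous of the two.
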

\begin{proof}
 By applying the It\^o's formula to $\|u(t)\|^{2m}$, we obtain
\begin{equation}
\label{zh}
\begin{aligned}
\|u(t)\|^{2m}=&\|u_0\|^{2m}-2\gamma m\int_{0}^{t}\|u(s)\|^{2m}dt-2m\int^{t}_{0}\|u(s)\|^{2(m-1)}\operatorname{Im}\left(u(s),f(s)\right) ds\\
&-2m\int^{t}_{0}\|u(s)\|^{2(m-1)}\operatorname{Im}\left(u(s),dW(s)\right)
+m\|\Phi\|^2_{L_{HS}}\int^{t}_{0}\|u(s)\|^{2(m-1)}ds\\
&+2(m-1)m\int^{t}_{0}\|u(s)\|^{2(m-2)}\sum_{j=1}^{\infty}[\operatorname{Im}\left( u(s),\Phi e_j\right)]^2ds.
\end{aligned}
\end{equation}
We know that the stochastic integral $\int^{t}_{0}\|u(s)\|^{2(m-1)}\operatorname{Im}\left( u(s),dW(s)\right)$ in \eqref{zh} is a martingale, so taking the expectation on both sides of \eqref{zh}, we have
\begin{equation*}
\begin{aligned}
\frac{d}{dt}\mathbb{E}\left[\|u(t)\|^{2m}\right]=&-2\gamma m \mathbb{E}\left[\|u(t)\|^{2m}\right]
-2m\mathbb{E}\left[\|u(t)\|^{2(m-1)}\operatorname{Im}\left( u(t),f(t)\right)\right]
+m\|\Phi\|^2_{L_{HS}}\mathbb{E}\left[\|u(t)\|^{2(m-1)}\right]\\
&+2(m-1)m\mathbb{E}\left[\|u(t)\|^{2(m-2)}\sum_{j=1}^{\infty}\left[\operatorname{Im}\left(u(t),\Phi e_j\right)\right]^2\right].
\end{aligned}
\end{equation*}
Using Young's inequality,  we obtain
\begin{equation*}
\begin{aligned}
&m\|\Phi\|_{L_{HS}}^2 \mathbb{E}\left[\|u\|^{2(m-1)}\right]+2(m-1) m \mathbb{E}\left[\|u\|^{2(m-2)} \sum_{j=1}^{\infty}\left[\operatorname{Im}\left(u, \Phi e_j\right)\right]^2\right]
 \leq m(2m-1)\|\Phi\|_{L_{HS}}^2 \mathbb{E}\left[\|u\|^{2(m-1)}\right] \\
& \leq \varepsilon_1 m \mathbb{E}\left[\|u\|^{2m}\right]+\|\Phi\|_{L_{HS}}^{2 m}\left(\frac{m-1}{m}\right)^{m-1}(2 m-1)^m \varepsilon_1^{1-m},
\end{aligned}
\end{equation*}
for $\varepsilon_1>0$. \\

Using the condition \eqref{assum1} and Young's inequality again, we obtain
\begin{equation*}
\begin{aligned}
\frac{d}{dt}\mathbb{E}\left[ \|u(t)\|^{2m}\right] &\leq -2\gamma m\mathbb{E}\left[\|u(t)\|^{2m}\right]-2m\mathbb{E}\left[\|u(t)\|^{2(m-1)}\operatorname{Im}\left(u(t),f(t)\right)\right]\\
 &\quad +\varepsilon_1 m  \mathbb{E}\left[ \|u(t)\|^{2m} \right]+C_1(m)\varepsilon_1^{1-m}\|\Phi\|_{L_{HS}}^{2 m} \\
&\leq -2\gamma m\mathbb{E}\left[\|u(t)\|^{2m}\right]+2\beta m\mathbb{E}\left[\|u(t)\|^{2m}\right]+2m\|\psi_1(t)\|_{L^1_x}\mathbb{E}\left[\|u(t)\|^{2(m-1)}\right]\\
&\quad +\varepsilon_1 m \mathbb{E} \left[\|u(t)\|^{2m}\right]+C_1(m) \varepsilon_1^{1-m}\|\Phi\|_{L_{HS}}^{2 m} \\
&\leq-2(\gamma-\beta) m\mathbb{E}\left[\|u(t)\|^{2m}\right]+\varepsilon_2 m \mathbb{E}\left[\|u(t)\|^{2m}\right]+2^m\|\psi_1(t)\|^m_{L^1_x}2^m\varepsilon_2^{1-m}\left(\frac{m-1}{m}\right)^{m-1}\\
&+\varepsilon_1 m \mathbb{E} \left[\|u(t)\|^{2m}\right]+C_1(m) \varepsilon_1^{1-m}\|\Phi\|_{L_{HS}}^{2 m},\\
\end{aligned}
\end{equation*}
for $\varepsilon_2>0$. Let $\varepsilon_1=\varepsilon_2 =\frac{1}{2}(\gamma-\beta)$. Then we have
\begin{equation}
\begin{aligned}
  \frac{d}{dt}\mathbb{E} \left[\|u(t)\|\right]^{2m} \leq& -\left(\gamma-\beta\right) m\mathbb{E}\|u(t)\|^{2m}
  +(\gamma-\beta)^{1-m}\left[\|\Phi\|_{L_{HS}}^{2 m} C_1(m)+C_2(m)\|\psi_1(t)\|^m_{L^1_x}\right].
\end{aligned}
\end{equation}
By means of Gr\"onwall inequality on $(\varrho-t,\varrho)$ and $\gamma>\beta$, we get
\begin{equation}
\begin{aligned}
\mathbb{E}\left[\|u(\varrho,\varrho-t,u_0)\|^{2m}\right] & \leq e^{-(\gamma-\beta) m t} \mathbb{E}\left[\|u_0\|^{2m}\right]\\
&\quad+(\gamma-\beta)^{1-m}\int_{\varrho-t}^{\varrho} e^{-(\gamma-\beta) m(\varrho-s)} \left[\|\Phi\|_{L_{HS}}^{2 m} C_1(m)+C_2(m)\|\psi_1(s)\|^m_{L^1_{x}}\right] ds \\
& =  e^{-(\gamma-\beta) m t} \mathbb{E}\left[\|u_0\|^{2m}\right]\\
&\quad+(\gamma-\beta)^{1-m}e^{-(\gamma-\beta)m\varrho}\int^{\varrho}_{\varrho-t}e^{(\gamma-\beta)ms}
\left[\|\Phi\|_{L_{HS}}^{2 m} C_1(m)+C_2(m)\|\psi_1\|^m_{L^1_{x}}\right]ds\\
& \leq e^{-(\gamma-\beta) m t} \mathbb{E}\left[\|u_0\|^{2m}\right]+ C_1(m)\|\Phi\|_{L_{HS}}^{2 m}(\gamma-\beta)^{-m}\left[1-e^{-(\gamma-\beta)mt}\right]\\
&\quad +C_2(m)(\gamma-\beta)^{1-m}e^{-(\gamma-\beta)m\varrho}\int^{\varrho}_{-\infty}e^{(\gamma-\beta)ms}
\|\psi_1(s)\|^m_{L^1_{x}}ds\\
&\leq \mathbb{E}\left[\|u_0\|^{2m}\right]+ C_1(m)\|\Phi\|_{L_{HS}}^{2 m}(\gamma-\beta)^{-m}\\
&\quad +C_2(m)(\gamma-\beta)^{1-m}e^{-(\gamma-\beta)m\varrho}\int^{\varrho}_{-\infty}e^{(\gamma-\beta)ms}
\|\psi_1(s)\|^m_{L^1_{x}}ds,\\
\end{aligned}
\end{equation}
for any $t \geq T$.

\end{proof}
In the following, we will give a sufficient criterion for the existence of weak $\mathcal{D}$-pullback mean random attractors in $L^\rho\left(\Omega; L^2\left(\mathbb{R}^n\right)\right)$.
\subsection{Proof of Theorem \ref{theorem2}}

Proof of Theorem  \ref{theorem2}. Given $\varrho\in \mathbb{R}$, denote by
\begin{equation}
\label{K}
\mathcal{K}(\varrho)=\left\{u\in \mathcal{D}: \mathbb{E}\left[\|u\|^{\rho}\right]\leq R(\varrho)\right \},
\end{equation}
where
\begin{equation}
\begin{aligned}
R(\varrho)&:=\mathbb{E}\left[\|u_0\|^{\rho}\right]+C_1(\rho)\|\Phi\|_{L_{HS}}^{\rho}(\gamma-\beta)^{-\rho/2}\\
&+C_2(\rho)(\gamma-\beta)^{1-\rho/2}e^{-(\gamma-\beta)\rho\varrho/2}\int^{\varrho}_{-\infty}e^{(\gamma-\beta)\rho s/2}
\|\psi_1\|^{\rho/2}_{L^1_{x}}ds.
\end{aligned}
\end{equation}
This implies that $\mathcal{K}(\varrho)$ given by \eqref{K} is a bounded closed convex subset in $L^\rho\left(\Omega; L^2\left(\mathbb{R}^n\right)\right)$, and hence it is weakly compact in $L^\rho\left(\Omega; L^2\left(\mathbb{R}^n\right)\right)$. By Lemma \ref{lem2}, we find that for every $(\varrho, {D})\in \mathbb{R}\times \mathcal{D}$, there exists $T:=T(\varrho, \mathcal{D})>0$ such that
\begin{equation}
\label{rr}
\sup_{t\geq T}\sup_{u_0\in \mathcal{D}(\varrho-t)}\|u(\varrho, \varrho-t, \theta_{-\tau}\omega, u_0)\|^{\rho}\leq  R(\varrho).
\end{equation}
The inequality \eqref{rr} tells us that for all $t\geq T$,
\begin{equation}
\Phi(t, \varrho-t)\mathcal{D}(\varrho-t)=u(\varrho, \varrho-t, \mathcal{D}(\varrho-t))\subseteq \mathcal{K}(\varrho).
\end{equation}
Therefore $\mathcal{K}=\{\mathcal{K}(\varrho): \varrho \in \mathbb{R}\}$ is an absorbing. Note that $\gamma > \beta$
\begin{equation}
\lim_{\varrho\rightarrow -\infty }e^{(\gamma-\beta) m \varrho}R(\varrho)=0.
\end{equation}
Then $\mathcal{K}$ is a weakly compact $\mathcal{D}$-pullback absorbing set for $\Phi$ in  $L^\rho\left(\Omega; L^2\left(\mathbb{R}^n\right)\right)$. By using the abstract result\cite[Theorem 2.13]{BW19}, we obtain the existence and uniqueness of weak $\mathcal{D}$-pullback mean random attractor $\mathcal{A} \in \mathcal{D}$ of $\Phi$.


\section{Acknowledgments}
 The research of Y. Zhang is supported by the Natural Science Foundation of Henan Province of China (Grant No. 232300420110).

\end{document}